\documentclass[12pt,a4paper,reqno]{amsart}
\usepackage{amsmath}
\date{}
 \usepackage[latin1]{inputenc}
\usepackage{amssymb , amsmath, amsthm}
 \usepackage{latexsym, esint}
 \usepackage{ulem}

\usepackage{latexsym}

\usepackage{color}

\oddsidemargin 0cm
\evensidemargin 0cm
\textwidth 16cm

\newtheorem{thm}{Theorem}[section]

\newtheorem{lemma}{Lemma}[section]

 \newcommand{\R}{\mathbb R}
\newcommand{\Sp}{\mathbb S}
\newcommand{\C}{\mathbb C}

\begin{document}

\title [ First eigenvalue of the Sub-Laplacian on  CR spheres]{  The first positive eigenvalue of the Sub-Laplacian on  CR spheres}
\author{ Amine Aribi }

\author{Ahmad El Soufi}
\address{Universit\'e de Tours, Laboratoire de Math\'ematiques
et Physique Th\'eorique, UMR-CNRS 7350, Parc de Grandmont, 37200
Tours, France.} \email{Amine.Aribi@lmpt.univ-tours.fr,   ahmad.elsoufi@lmpt.univ-tours.fr}

\begin{abstract}
We prove that the   first positive eigenvalue, normalized by the volume,  of the sub-Laplacian associated with a strictly pseudoconvex pseudo-Hermitian structure $\theta$ on the CR sphere  $\Sp^{2n+1}\subset \C^{n+1}$, achieves its maximum when $\theta$ is the standard contact form. 

\end{abstract}

\subjclass[2010]{32V20, 35H20,  58J50.}
\keywords{CR Sphere, Sub-Laplacian, eigenvalue}

\maketitle
\section{Introduction and statement of the main result}
According to a classical result of Hersch \cite{Hersch},  given any Riemannian metric $g$ on the 2-dimensional sphere $\Sp^2$, the first positive eigenvalue $\lambda_1 (g)$ of the Laplace-Beltrami operator  $\Delta_g$ satisfies the estimate 
\begin{equation}\label{hersch}
\lambda_1 (g) A(g)\le  \lambda_1 (g_0) A(g_0)
\end{equation}
where $g_0$ is the standard metric of $\Sp^2$ and $A(g)$ is the area of $\Sp^2$ with respect to $g$. Moreover, the equality holds in \eqref{hersch} if and only if $g$ is isometric to $g_0$. This result was extended to higher dimensional spheres by Ilias and the second author as follows  (see \cite[proposition 3.1]{SoIl}) : If  a Riemannian metric $g$ on the $n$-dimensional sphere $\Sp^n$ is conformal to the standard metric $g_0$, then
\begin{equation}\label{elsoufi-ilias}
\lambda_1 (g) V(g)^{\frac2n}\le  \lambda_1 (g_0) V(g_0)^{\frac2n}
\end{equation}
where $V(g)$ denotes the Riemannian volume of the sphere with respect to $g$. Again, the equality holds in \eqref{elsoufi-ilias} if and only if $g$ is isometric to  $g_0$. 

\smallskip
The aim of the present paper is to establish a version of the estimate \eqref{elsoufi-ilias} for the first positive eigenvalue of the sub-Laplacian on the   CR sphere $\Sp^{2n+1}\subset \C^{n+1}$. Indeed, let  
$$\theta_0=\frac{i}{2}\sum_{j=1}^{n+1}\left(\zeta_j\,d\bar{\zeta}_j- \bar{\zeta}_j\,d\zeta_j\right)$$
 be the standard contact form on $\Sp^{2n+1}$ whose kernel coincides with the Levi distribution $H(\Sp^{2n+1})=T\Sp^{2n+1}\cap JT\Sp^{2n+1}$, where $J$ is the complex structure of $\C^{n+1}$. The set ${\mathcal P}_+(\Sp^{2n+1})=\{f\theta_0\ ; \ f\in C^\infty(\Sp^{2n+1}) \mbox{ and } f>0\}$ contains all  pseudo-Hermitian structures on $\Sp^{2n+1}$ whose Levi form is positive definite.  Given a  pseudo-Hermitian structure $\theta\in {\mathcal P}_+(\Sp^{2n+1})$, we  denote by  $\lambda_1( {\theta})$ the first positive eigenvalue of the corresponding sub-Laplacian $\Delta_{\theta} $, and by $V({\theta})$ the volume of $\Sp^{2n+1}$ with respect to the volume form $\psi_{{\theta}}=\frac{1}{n!2^n } \, {\theta}\wedge (d{\theta})^n$  (see the next section for precise definitions). The main result of this paper is 

\begin{thm}\label{theo 41}
For every pseudo-Hermitian structure $\theta\in {\mathcal P}_+(\Sp^{2n+1})$ we have 
\begin{equation}\label{maineq}
\lambda_1({\theta})V( {\theta})^{\frac{1}{n+1}}\leq \lambda_1({\theta_0})V( {\theta_0})^{\frac{1}{n+1}} .
\end{equation}
The  equality holds  in \eqref{maineq}  if and only if there exists a CR-automorphism $\gamma$ of $ \mathbb{S}^{2n+1}$ such that ${\theta}=c \, \gamma^*\theta_0 $ for some constant $c>0$,
 or if and only if there exist $p\in \mathbb{S}^{2n+1}$ and  $t\ge 0$  such that 
$${\theta} =\frac{c}{\left|\cosh t +\sinh t \, (\zeta, p)\right|^2}\,\theta_{0},$$
where $(\ ,\ )$ denotes the standard Hermitian product of $\C^{n+1}$.
\end{thm}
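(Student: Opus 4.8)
The plan is to transpose, to the CR/pseudo-Hermitian setting, the Hersch--El Soufi--Ilias argument recalled in the introduction (\cite{Hersch}, \cite{SoIl}). First I would record the behaviour of every object under the conformal change $\theta=f\theta_0$, $f>0$. A direct computation gives $\psi_\theta=f^{n+1}\psi_{\theta_0}$, while the Levi form scales as $L_\theta=f\,L_{\theta_0}$ on $H(\Sp^{2n+1})$, so that $|\nabla_\theta u|^2_\theta=f^{-1}|\nabla_{\theta_0}u|^2_{\theta_0}$. Consequently the two integrals entering the Rayleigh quotient of $\Delta_\theta$ become
\begin{equation}
\int_{\Sp^{2n+1}}|\nabla_\theta u|^2_\theta\,\psi_\theta=\int_{\Sp^{2n+1}}f^{n}\,|\nabla_{\theta_0}u|^2_{\theta_0}\,\psi_{\theta_0},\qquad \int_{\Sp^{2n+1}}|u|^2\,\psi_\theta=\int_{\Sp^{2n+1}}f^{n+1}|u|^2\,\psi_{\theta_0},
\end{equation}
and $\lambda_1(\theta)$ is the infimum of the first quotient over $u$ with $\int_{\Sp^{2n+1}}u\,\psi_\theta=0$. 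On the standard sphere the needed input is that the coordinate functions $\zeta_1,\dots,\zeta_{n+1}$ (and their conjugates) are eigenfunctions of $\Delta_{\theta_0}$ for the first positive eigenvalue, that $\sum_j|\zeta_j|^2\equiv1$, and that by $U(n+1)$-invariance the horizontal energy density $\sum_j|\nabla_{\theta_0}\zeta_j|^2_{\theta_0}$ is a constant; integrating the eigenvalue equation then identifies this constant with $\lambda_1(\theta_0)$.

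The core step is a Hersch-type balancing, and I expect it to be the main obstacle. The CR automorphism group of $\Sp^{2n+1}$ is $PU(n+1,1)$, acting by the boundary values of the biholomorphisms of the unit ball; modulo the isometry subgroup $U(n+1)$ these are parametrized by the open ball, and each such $\gamma$ satisfies $\gamma^*\theta_0=\rho_\gamma\,\theta_0$ with $\rho_\gamma>0$, the factors $\rho_\gamma$ being exactly the $|\cosh t+\sinh t\,(\zeta,p)|^{-2}$ appearing in the statement. Using a degree/Brouwer argument applied to the map $a\mapsto\int_{\Sp^{2n+1}}(\zeta\circ\gamma_a)\,\psi_\theta$, which extends continuously to the closed ball and restricts to the identity on the boundary, I would produce a CR automorphism $\gamma$ with $\int_{\Sp^{2n+1}}(\zeta_j\circ\gamma)\,\psi_\theta=0$ for every $j$. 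The technical heart here is the precise description of the automorphisms, their conformal factors, and the concentration behaviour as $a$ approaches $\partial B$.

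With the balanced functions $\gamma^j=\zeta_j\circ\gamma$ admissible as test functions, the variational characterization gives $\lambda_1(\theta)\int_{\Sp^{2n+1}}|\gamma^j|^2\psi_\theta\le\int_{\Sp^{2n+1}}|\nabla_\theta\gamma^j|^2_\theta\,\psi_\theta$ for each $j$. Summing over $j$, using $\sum_j|\gamma^j|^2\equiv1$ on the left and the conformal covariance $\sum_j|\nabla_{\theta_0}\gamma^j|^2_{\theta_0}=\lambda_1(\theta_0)\,\rho_\gamma$ together with the first display on the right, yields
\begin{equation}
\lambda_1(\theta)\,V(\theta)\le\lambda_1(\theta_0)\int_{\Sp^{2n+1}}f^{n}\,\rho_\gamma\,\psi_{\theta_0}.
\end{equation}
Applying H\"older's inequality with the conjugate exponents $\tfrac{n+1}{n}$ and $n+1$, and noting that $\int_{\Sp^{2n+1}}\rho_\gamma^{\,n+1}\psi_{\theta_0}=V(\gamma^*\theta_0)=V(\theta_0)$ since $\gamma$ is a diffeomorphism, bounds the right-hand side by $\lambda_1(\theta_0)\,V(\theta)^{n/(n+1)}V(\theta_0)^{1/(n+1)}$, which is precisely the estimate \eqref{maineq}.

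For the equality discussion I would trace back the two inequalities. Equality in H\"older forces $f$ to be proportional to $\rho_\gamma$, say $f=c\,\rho_\gamma$, whence $\theta=c\,\gamma^*\theta_0$; equality in the Rayleigh estimates forces each $\gamma^j$ to be a genuine $\lambda_1(\theta)$-eigenfunction, consistent with this conclusion. Conversely, any $\theta=c\,\gamma^*\theta_0$ saturates \eqref{maineq} because the normalized quantity $\lambda_1(\theta)V(\theta)^{1/(n+1)}$ is invariant under the scaling $\theta\mapsto c\theta$ (since $\lambda_1(c\theta)=c^{-1}\lambda_1(\theta)$ and $V(c\theta)=c^{n+1}V(\theta)$) and under pullback by a CR-automorphism. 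Finally, writing out $\rho_\gamma$ for a general element of $PU(n+1,1)$, reduced modulo $U(n+1)$ to the one-parameter families fixing a pair of antipodal directions $\pm p$, produces exactly the explicit factor $|\cosh t+\sinh t\,(\zeta,p)|^{-2}$, giving the second characterization.
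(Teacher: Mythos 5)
Your proposal is correct and follows essentially the same route as the paper's proof: a Hersch-type balancing over the conformal CR-automorphisms (the paper's $\gamma^p_t$ and its Lemma 2.3), the coordinates composed with the balanced automorphism as test functions in the min-max principle, H\"older with exponents $\tfrac{n+1}{n}$ and $n+1$ combined with $\int_{\Sp^{2n+1}}\rho_\gamma^{\,n+1}\psi_{\theta_0}=V(\gamma^*\theta_0)=V(\theta_0)$, and the equality case traced back to equality in H\"older forcing $\theta=c\,\gamma^*\theta_0$. The only cosmetic difference is that you derive the energy identity $\sum_j|\nabla^H\gamma^j|^2=2n\,\rho_\gamma$ by conformal covariance relative to $\theta_0$, where the paper proves it via a small lemma (its Lemma 2.2) valid for arbitrary CR maps into the sphere.
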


This result can be seen as a contribution to the program aiming to recovering the main results of spectral geometry, established for the eigenvalues of the Laplace-Beltrami operator on a compact Riemannian manifold,  in the realm of CR and pseudo-Hermitian geometry. This program has motivated a lot of research in recent years and we can find significant contributions in   \cite{ArDrSou, ArDrSou2,  ArDrSou1, AriSou,  BarlettaDragomirSpectrum, Barletta, BarlettaDragomirSublaplacians, ChangLi,  Changchiu, Cowling, Greenleaf, hako,  IvVss2, IvVss1, Kok, LiLuk,  NiuZhang, ponge, NS}.


\section{Proof of  Theorem \ref{theo 41}}
Let $M$ be a connected differentiable manifold of  dimension $2n+1\ge 3$.
A CR structure on $M$ is a couple $(H(M),J)$ where 
$H(M)$ is a $2n$-dimensional subbundle of the tangent bundle $TM$, the so-called  Levi distribution, endowed with a pseudo-complex operator $J$ satisfying the following integrability condition : $\forall X,Y\in\Gamma(H(M))$,
$$[X,Y]-[J X,JY] \ \in \ \Gamma(H(M))$$
and
$$J\left([X,Y]-[J X,JY]\right)= [J X,Y]+[X,J Y] .$$
Real hypersurfaces of $\C^{n+1}$ are the most natural examples of  CR manifolds. Indeed, if $M\subset  \C^{n+1}$ is such a hypersurface, then $H(M):= T M \cap J(T M)$ endowed with the restriction of the standard complex structure $J$ of $\C^{n+1}$, is a CR structure on $M$. 

\smallskip
If  $(M, H(M), J)$ is an orientable CR manifold, then there exists a nontrivial 1-form $\theta\in \Gamma(T^* M)$ such that $Ker\theta = H(M)$. Such a 1-form, called  {pseudo-Hermitian} structure,  is  of course not unique. The set of pseudo-Hermitian structures  consists in all the forms $f\theta$, where $f$ is a smooth nowhere zero function on M.  
To each  pseudo-Hermitian structure  $\theta$ we associate its {Levi form} $L_{\theta}$ defined on $H(M)$  by
    \begin{equation*}
        L_{\theta}(X,Y)=-d\theta(JX,Y)=\theta([J X,Y]).
    \end{equation*} 
The integrability of $J$ implies that $L_{\theta}$ is symmetric and $J$-invariant. The $CR$ manifold $M$ is called  {strictly pseudoconvex} if $L_{\theta}$ is  definite.  Of course, this condition does not depend on the choice of $\theta$ (since $L_{f\theta}=fL_{\theta}$). In  the sequel, we  denote by ${\mathcal P}_+(M)$ the set of all pseudo-Hermitian structures with positive definite Levi form. Every $\theta\in {\mathcal P}_+(M)$ is in fact a contact form which induces on $M$ the following volume form 
$$\psi_{\theta}=\frac{1}{2^n\ n!} \ \theta\wedge (d\theta)^n .$$ 
The associated divergence $\text{div}_\theta$ is defined, for every  smooth vector field $Z$ on $M$, by
$$\mathcal{L}_Z \psi_{\theta}= \text{div}_\theta(Z)\,  \psi_{\theta}.$$
The sub-Laplacian $\Delta_\theta$ is then defined for all $u\in C^\infty(M)$, by
$$\Delta_\theta u=\text{div}_\theta(\nabla^H u)$$
where $\nabla^H u\in \Gamma(H(M))$ is the horizontal vectorfield such that, $\forall X\in H(M)$, $du(X)=L_\theta (\nabla^H u,X)$. The following integration by parts formula holds for any $u,v\in C_0^\infty(M)$:
$$\int_M (\Delta_\theta u)\, v \,\psi_\theta=-\int_M L_\theta(\nabla^H u,\nabla^H v)\, \psi_\theta.$$
Given $\theta\in {\mathcal P}_+(M)$, there is a unique vectorfield $\xi$, often called  Reeb vectorfield,   that satisfies  $\theta(\xi)=1$ and $\xi\rfloor d\theta=0$. The Levi form $L_\theta$ extends to a Riemannian metric on $M$  (the Webster metric)   given by
    $$g_\theta (X,Y)= L_\theta(X^H,Y^H) + \theta (X)\theta (Y)$$
    with $X^H=X-\theta(X)\xi$. The corresponding Laplace-Beltrami operator $\Delta_{g_\theta}$ is related to $\Delta_\theta$ by the following formula (see \cite{Greenleaf})
    \begin{equation}\label{green}
    \Delta_\theta =\Delta_{g_\theta} -\xi^2.
        \end{equation}
The sub-Laplacian $\Delta_\theta$ is a sub-elliptic operator of order $1/2$.  When $M$ is compact, it admits  a self-adjoint  extension  to an unbounded operator   of $L^2(M, \psi_{\theta})$ whose resolvent is compact (see for instance \cite[Lemma 2.2]{ArDrSou2}). Hence, the spectrum of $-\Delta_\theta$  is discrete and consists of a sequence of nonnegative eigenvalues of finite multiplicity $\{\lambda_k(\theta)\}_{k\ge 0}$ with $\lambda_0(\theta)=0$. The min-max variational principle  gives 
    \begin{equation}\label{minmax0}
    \lambda_1(\theta)=\inf_{\int_M u \, \psi_{\theta}=0} \frac{\int_M\vert \nabla^H u\vert_{\theta}^2\, \psi_{\theta}}{\int_M u^2\, \psi_{\theta}}
        \end{equation}
where $\vert \nabla^H u\vert_{\theta}^2=L_\theta(\nabla^H u,\nabla^H u)$.

\subsection{The CR Sphere}
Let $\mathbb{S}^{2n+1}$ be the unit Sphere in $\mathbb{C}^{n+1}$  
$$\mathbb{S}^{2n+1}=\left\{\zeta=(\zeta_1,...,\zeta_{n+1})\in \mathbb{C}^{n+1}\, ; \,\sum_{j\le n+1}\vert \zeta_j\vert^2 =1\right\}$$ 
endowed with its standard   CR-structure.  
The restriction to $\mathbb{S}^{2n+1}$ of the contact form 
$$\theta_0=-\frac{i}{2}\sum_{j=1}^{n+1}\left(\bar{\zeta}_j\,d\zeta_j-\zeta_j\,d\bar{\zeta}_j\right)$$
is a pseudohermitian structure whose Reeb field 
$\xi=i\sum_{j=1}^{n+1}\left(\zeta_j\frac{\partial}{\partial \zeta_j}-\bar{\zeta}_j\frac{\partial}{\partial \bar{\zeta}_j}\right)$
generates the natural action 
of $\, \mathbb{S}^1$ on $\mathbb{S}^{2n+1}$. Since $d\theta_0$ is the standard Kähler form of $\C^{n+1}$, the induced Levi form on $H(\Sp^{2n+1})$ coincides with the standard metric of the sphere.

\smallskip
If  $\textsl{V}^{p,q}$ is the space of harmonic polynomials of bi-degree $(p,q)$ in $\mathbb{C}^{n+1}$, then,  $\xi$ acts on $\textsl{V}^{p,q}$  as the multiplication by $i(p-q)$ and it can be deduced, using \eqref{green},
that  $\textsl{V}^{p,q}$ is an eigenspace of $\Delta_{\theta_0}$ on $\mathbb{S}^{2n+1}$ with eigenvalue $2n(p+q)+4pq$ (see \cite[Theorem 4.1]{Cowling} and \cite[Proposition 4.4]{NS} for details). 
 Therefore, 
\begin{equation}\label{eqn e44}
\lambda_1(\theta_0)=2n.
\end{equation}

\subsection{One-parameter groups of CR-automorphisms of the sphere}

 A differentiable map $\varphi:M\rightarrow \widetilde{M}$ between two CR manifolds is a CR map if for any $x\in M$,
\begin{equation}\label{crmap}
d_x \varphi (H_x(M))\subset H_{\varphi (x)}(\widetilde{M})
\qquad 
\mbox{and}
\qquad 
d_x \varphi \circ J^M_x = J^{\widetilde{M}}_{\varphi (x)}\circ d_x \varphi .
\end{equation}
A CR-automorphism of a CR manifold $M$ is a diffeomorphism of $M$ which is a CR map.

\smallskip

Let $e_{n+1}=(0,\cdots,0,1)\in \mathbb{S}^{2n+1}$. The punctured sphere $\mathbb{S}^{2n+1}\setminus \{e_{n+1}\}$ can  be identified with the boundary of the so-called Siegel domain  $\Omega_{n+1}=\{(z,w)\in \C^n\times \C \ :\   \mbox{Im}\ w>\vert z\vert ^2 \}\subset  \C^{n+1}$   through the CR diffeomorphism $ \Phi : \mathbb{S}^{2n+1}\setminus \{e_{n+1}\}\to \partial \Omega_{n+1}$ given by
$$\Phi(\zeta)=\frac 1 {1-\zeta_{n+1}} \left(\zeta_1,\cdots,\zeta_n, i( 1+\zeta_{n+1})\right)$$
with
$$\Phi^{-1}(z,w) =  \frac 1{w+i} (2i z_1,\cdots,2 i z_n, w-i)$$
For every  $t \in \R$, the ``dilation" 
$$\begin{array}{ccccc}
H_t & : &  \partial\Omega_{n+1} & \to &  \partial\Omega_{n+1}\\
 & & (z,w)& \mapsto & (e^t z, e^{2t} w) \\
 \end{array}$$
is a CR-automorphism of $ \partial \Omega_{n+1}$. 
We define $\gamma_t: \mathbb{S}^{2n+1}\to\mathbb{S}^{2n+1}$ by $\gamma_t(e_{n+1})=e_{n+1}$ and, $\forall \zeta\in  \mathbb{S}^{2n+1}\setminus \{e_{n+1}\}$,
$$\gamma_t (\zeta) = \Phi^{-1}\circ H_t\circ \Phi (\zeta) =  \frac1{\cosh t +\sinh t \, \zeta_{n+1}} \left(\zeta_1, \cdots,\zeta_n, \sinh t +\cosh t  \ \zeta_{n+1} \right)$$
or 
$$\gamma_t (\zeta) = \frac1{\cosh t +\sinh t \, \zeta_{n+1}} \left(\zeta + \left(  \sinh t +(\cosh t-1)  \ \zeta_{n+1}\right)e_{n+1}\right).$$
\begin{lemma}
For every $t,$ the map $\gamma_t$ is a CR-automorphism of $\mathbb{S}^{2n+1}$ which satisfies $$(\gamma_t)^*\theta_0=\frac{1}{\left|\cosh t +\sinh t \, \zeta_{n+1}\right|^2}\,\theta_0 .$$
\end{lemma}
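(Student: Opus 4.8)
The plan splits into two essentially independent tasks: showing that $\gamma_t$ is a CR-automorphism of the whole sphere, and verifying the conformal pullback formula. For the first task I would set aside the definition through the chart $\Phi$ and instead read off from the explicit formula that $\gamma_t$ is the restriction to $\mathbb{S}^{2n+1}$ of the map
$$\zeta \longmapsto \frac{1}{\cosh t + \sinh t\,\zeta_{n+1}}\big(\zeta_1,\dots,\zeta_n,\ \sinh t + \cosh t\,\zeta_{n+1}\big),$$
whose components are \emph{holomorphic} functions of $\zeta$ on the open set $\{\cosh t+\sinh t\,\zeta_{n+1}\neq 0\}$. Since $|\zeta_{n+1}|\le 1$ on the sphere while $|\sinh t|<\cosh t$, the denominator never vanishes in a neighbourhood of $\mathbb{S}^{2n+1}$, so $\gamma_t$ is the trace on the sphere of a biholomorphism of such a neighbourhood; in particular it is smooth across $e_{n+1}$ and agrees with $\Phi^{-1}\circ H_t\circ\Phi$ away from it. I would then check that this holomorphic map carries the sphere into itself, i.e. that $\sum_{j\le n}|\zeta_j|^2+|\sinh t+\cosh t\,\zeta_{n+1}|^2=|\cosh t+\sinh t\,\zeta_{n+1}|^2$ whenever $\sum_{j\le n+1}|\zeta_j|^2=1$; expanding both sides, the difference collapses to $(1-\cosh^2 t+\sinh^2 t)(1-|\zeta_{n+1}|^2)$, which vanishes by $\cosh^2 t-\sinh^2 t=1$. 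A holomorphic map taking one real hypersurface onto another automatically preserves $J$, hence the Levi distribution $H=T\cap JT$, and is therefore a CR map; as $\gamma_t$ is a diffeomorphism of the sphere with inverse $\gamma_{-t}$, it is a CR-automorphism.

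For the pullback I would write $\theta_0=\operatorname{Im}\eta$ with $\eta:=\sum_{j=1}^{n+1}\bar\zeta_j\,d\zeta_j$, set $D:=\cosh t+\sinh t\,\zeta_{n+1}$ (holomorphic, with $\bar D=\cosh t+\sinh t\,\bar\zeta_{n+1}$), and express the components of $\gamma_t$ as $w_j=u_j/D$, where $u_j=\zeta_j$ for $j\le n$ and $u_{n+1}=\sinh t+\cosh t\,\zeta_{n+1}$. Since $\gamma_t$ is holomorphic, $(\gamma_t)^*\eta=\sum_j \bar w_j\,dw_j$; applying $dw_j=D^{-1}du_j-u_jD^{-2}\,dD$ and summing gives
$$(\gamma_t)^*\eta=\frac{1}{|D|^2}\sum_j \bar u_j\,du_j-\frac{dD}{\bar D\,D^2}\sum_j|u_j|^2.$$
The two remaining ingredients are the sphere identity $\sum_j|u_j|^2=|D|^2$ established above, and the short computation $\sum_j\bar u_j\,du_j=\eta+\sinh t\,\bar D\,d\zeta_{n+1}$, which uses $\cosh^2 t-1=\sinh^2 t$. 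Substituting, and using $\bar D/|D|^2=1/D$ together with $dD=\sinh t\,d\zeta_{n+1}$, the two terms proportional to $d\zeta_{n+1}$ cancel, leaving $(\gamma_t)^*\eta=\eta/|D|^2$. Taking imaginary parts and using that $|D|^2$ is real then yields $(\gamma_t)^*\theta_0=|D|^{-2}\theta_0$, the asserted formula.

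The two algebraic simplifications are routine; the point I expect to require the most care — and which I regard as the main obstacle — is the CR property at the removed point $e_{n+1}$. Rather than analysing $\Phi^{-1}\circ H_t\circ\Phi$ near the puncture, the holomorphic-extension viewpoint resolves this at once: $\gamma_t$ is the trace on $\mathbb{S}^{2n+1}$ of a biholomorphism of a full neighbourhood of the sphere, so the CR condition holds uniformly, $e_{n+1}$ included. A secondary subtlety is that the identity $\sum_j|u_j|^2=|D|^2$ holds only on the sphere, so all the form identities in the second step must be read as equalities of forms restricted to $\mathbb{S}^{2n+1}$; since $\theta_0$ is itself such a restriction, this is precisely the setting required.
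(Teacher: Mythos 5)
Your proof is correct, and it departs from the paper's in two genuinely useful ways. For the pullback identity, the paper computes head-on the full real expression $\sum_j\bigl(f_j\,d\bar f_j-\bar f_j\,df_j\bigr)$ for the components $f_j$ of $\gamma_t$, using $\sum_{j\le n}|\zeta_j|^2=1-|\zeta_{n+1}|^2$ and $\cosh^2t-\sinh^2t=1$ to force the cancellation of the residual $d\zeta_{n+1}$ and $d\bar\zeta_{n+1}$ terms. Your device of writing $\theta_0=\operatorname{Im}\eta$ with $\eta=\sum_j\bar\zeta_j\,d\zeta_j$ and exploiting holomorphy, so that only the $(1,0)$-part $\sum_j\bar w_j\,dw_j$ need be computed, halves that computation; I checked your two auxiliary identities, $\sum_j|u_j|^2=|D|^2$ on the sphere (the difference is indeed $(1-\cosh^2t+\sinh^2t)(1-|\zeta_{n+1}|^2)=0$) and $\sum_j\bar u_j\,du_j=\eta+\sinh t\,\bar D\,d\zeta_{n+1}$ (via $c^2-1=s^2$), and the final cancellation of the term $dD/D$ is exactly the paper's cancellation in disguise. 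More substantively, the paper's proof of this lemma establishes \emph{only} the pullback formula: it never addresses the first assertion, that $\gamma_t$, defined through the chart $\Phi$ away from $e_{n+1}$ and by decree at $e_{n+1}$, is smooth and CR at the puncture. Your holomorphic-extension argument fills precisely this gap: since $|\cosh t+\sinh t\,\zeta_{n+1}|\ge\cosh t-|\sinh t|>0$ on $\{|\zeta_{n+1}|\le 1\}$, the explicit rational formula defines a biholomorphism of a neighbourhood of $\mathbb{S}^{2n+1}$ (with inverse the extension of $\gamma_{-t}$), carrying the sphere into itself by the identity above, and any such map commutes with $J$ and sends $T\mathbb{S}\cap JT\mathbb{S}$ into itself, so the CR condition holds uniformly, $e_{n+1}$ included. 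Your closing caveat --- that the identity $\sum_j|u_j|^2=|D|^2$, hence all subsequent form identities, hold only after restriction to the sphere --- is also the correct reading, and is implicit in the paper's computation as well. In short: same explicit formulas and the same underlying algebra, but a leaner organization of the pullback computation, plus a proof of the automorphism claim that the paper leaves tacit.
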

\begin{proof}
Let $f_j(\zeta)=\frac{\zeta_j}{\cosh t +\sinh t \, \zeta_{n+1}}$, $j\le n$, and  $f_{n+1}(\zeta)=\frac{\sinh t +\cosh t  \, \zeta_{n+1}}{\cosh t +\sinh t \, \zeta_{n+1}}$.
Then 
$$df_j=\frac{d\zeta_j}{\cosh t +\sinh t \, \zeta_{n+1}}-\frac{ \sinh t\, \zeta_j  \,d\zeta_{n+1}}{(\cosh t +\sinh t \,\zeta_{n+1})^2} \ \ \mbox{and}\ \ 
df_{n+1}=\frac{d \zeta_{n+1}}{(\cosh t +\sinh t \, \zeta_{n+1})^2}.$$
Therefore
$$f_j\, d\bar{f}_j=\frac{\zeta_j\,d\bar{\zeta}_j}{\left|\cosh t +\sinh t \, \zeta_{n+1}\right|^2}-\frac{\left|\zeta_j\right|^2\sinh t \, d\bar{\zeta}_{n+1}}{\left|\cosh t +\sinh t \, \zeta_{n+1}\right|^2(\cosh t +\sinh t \, \bar{\zeta}_{n+1})},$$
$$\bar{f}_j \, df_j=\frac{\bar{\zeta}_{j}\,d\zeta_j}{\left|\cosh t +\sinh t \, \zeta_{n+1}\right|^2}-\frac{\left|\zeta_j\right|^2 \sinh t \, d\zeta_{n+1}}{\left|\cosh t +\sinh t \, \zeta_{n+1}\right|^2(\cosh t +\sinh t \, \zeta_{n+1})}$$
which gives with $\sum_{j=1}^n\left|\zeta_j\right|^2=1- \left|\zeta_{n+1}\right|^2$, 
$$\sum_{j=1}^n \left( f_jd\bar{f_j}-\bar{f_j}df_j \right)=\frac 1{\left|\cosh t +\sinh t \, \zeta_{n+1}\right|^2}\sum_{j=1}^n \left(\zeta_j\,d\bar{\zeta_j}-\bar{\zeta_j}\,d\zeta_j \right)+ \qquad \qquad\qquad \qquad$$ 
$$\qquad \qquad \qquad  \frac{ \left(1-\left|\zeta_{n+1}\right|^2 \right) \sinh t}{\left|\cosh t +\sinh t \, \zeta_{n+1}\right|^2}  \left( \frac{d\zeta_{n+1}}{\cosh t +\sinh t \, \zeta_{n+1}} -\frac{d\bar{\zeta}_{n+1}}{\cosh t +\sinh t \, \bar{\zeta}_{n+1}}  \right).$$
On the other hand, 
$$
f_{n+1}\, d\bar{f}_{n+1}-\bar{f}_{n+1}\, d f_{n+1} =  \qquad \qquad\qquad \qquad\qquad \qquad \qquad\qquad\qquad \qquad \qquad\qquad\qquad$$
$$ \frac 1{\left|\cosh t +\sinh t \, \zeta_{n+1}\right|^2}\left( 
\frac{\sinh t +\cosh t  \ \zeta_{n+1}}{\cosh t +\sinh t \, \bar{\zeta}_{n+1}}\ d\bar{\zeta}_{n+1}
-\frac{(\sinh t +\cosh t  \, \bar{\zeta}_{n+1})}{\cosh t +\sinh t \, \zeta_{n+1}} \, d\zeta_{n+1} \right)
$$
Now (with $\left|\zeta_{n+1}\right|^2=  \zeta_{n+1}\bar{\zeta}_{n+1}$),
$$\frac{ \left(1-\left|\zeta_{n+1}\right|^2 \right) \sinh t -\left( \sinh t +\cosh t  \, \bar{\zeta}_{n+1}\right)}{\cosh t +\sinh t \, \zeta_{n+1} } = -   \bar{\zeta}_{n+1}$$
and
$$\frac{ -\left(1-\left|\zeta_{n+1}\right|^2 \right) \sinh t +\left( \sinh t +\cosh t  \, {\zeta_{n+1}}\right)}{\cosh t +\sinh t \, \bar{\zeta}_{n+1} } =  {\zeta_{n+1}}$$
Thus,
$$\sum_{j=1}^{n+1} \left( f_jd\bar{f_j}-\bar{f_j}df_j \right)=   \frac{1}{\left|\cosh t +\sinh t \, \zeta_{n+1}\right|^2}\sum_{j=1}^{n+1} \left(\zeta_j\,d\bar{\zeta_j}-\bar{\zeta_j}\,d\zeta_j \right)$$
that is,
$$(\gamma_t)^*\theta_0= \frac i 2\sum_{j=1}^{n+1} \left( f_jd\bar{f_j}-\bar{f_j}df_j \right)=\frac{1}{\left|\cosh t +\sinh t \, \zeta_{n+1}\right|^2} \,\theta_0.$$
\end{proof}
Let $p\in \mathbb{S}^{2n+1}$ be any point of the sphere and let $\alpha_p\in U(n+1)$ be such that $\alpha_p( p )=e_{n+1}$. 
The family $\gamma_t ^p=\alpha_p^{-1}\circ \gamma_t\circ \alpha_p$ is a  1-parameter group of CR-automorphisms of the sphere $\mathbb{S}^{2n+1}$ with
\begin{equation}\label{e: 2a0}
\gamma_t^p (\zeta) = \frac1{\cosh t +\sinh t \, (\zeta , p)} \left\{\zeta + \left(  \sinh t +(\cosh t-1)  \,  (\zeta , p) \right)p\right\}
\end{equation}
and
\begin{equation}\label{e: 2a}
(\gamma_t^p)^*\theta_{0}=\frac{1}{\left|\cosh t +\sinh t \,  (\zeta , p)\right|^2}\,\theta_{0}.
\end{equation}


\subsection{Preparatory lemmas}
\begin{lemma}\label{lem 0}
Let $M$ be a strictly pseudoconvex CR manifold of dimension $2n+1$ and let $\theta$, $\hat \theta \in {\mathcal P}_+(M)$ be two pseudo-Hermitian structures with $ \hat \theta =f\, \theta $,  $f\in C^\infty(M)$. Then
\begin{equation}\label{equ 32a}
\psi_{\hat{\theta}}=f^{n+1}\psi_{\theta}
\end{equation}
\end{lemma}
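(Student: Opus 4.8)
The plan is to compute $\hat\theta\wedge(d\hat\theta)^n$ directly from the relation $\hat\theta=f\theta$ and to compare it with $\theta\wedge(d\theta)^n$. First I would differentiate to obtain
\[
d\hat\theta = df\wedge\theta + f\,d\theta,
\]
so that the two summands $\alpha:=df\wedge\theta$ and $\beta:=f\,d\theta$ are both $2$-forms and therefore commute under the wedge product. This allows me to expand the $n$-th exterior power by the binomial theorem,
\[
(d\hat\theta)^n=(\alpha+\beta)^n=\sum_{k=0}^n\binom{n}{k}\alpha^k\wedge\beta^{\,n-k}.
\]

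The key simplification is that $\alpha^2=(df\wedge\theta)\wedge(df\wedge\theta)=-df\wedge df\wedge\theta\wedge\theta=0$, so every term with $k\ge 2$ drops out. Only the terms $k=0$ and $k=1$ survive, which gives
\[
(d\hat\theta)^n=f^n(d\theta)^n+n\,f^{\,n-1}\,df\wedge\theta\wedge(d\theta)^{n-1}.
\]

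Next I would wedge with $\hat\theta=f\theta$. The first summand yields $f^{n+1}\,\theta\wedge(d\theta)^n$, which is precisely the desired term. The second summand produces $n\,f^{n}\,\theta\wedge df\wedge\theta\wedge(d\theta)^{n-1}$, and this vanishes because $\theta$ occurs twice: moving one copy past the $1$-form $df$ turns $\theta\wedge df\wedge\theta$ into $-\theta\wedge\theta\wedge df=0$. Therefore $\hat\theta\wedge(d\hat\theta)^n=f^{n+1}\,\theta\wedge(d\theta)^n$, and dividing by $2^n n!$ yields \eqref{equ 32a}.

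There is no serious obstacle here; the entire argument is elementary multilinear algebra. The only point requiring care is the bookkeeping of signs in the wedge products, specifically the two cancellations $(df\wedge\theta)^2=0$ and the annihilation of the cross term carrying a repeated factor of $\theta$. I would also note that strict pseudoconvexity ensures $\theta\wedge(d\theta)^n$ is nowhere vanishing, so both $\psi_\theta$ and $\psi_{\hat\theta}$ are genuine volume forms and \eqref{equ 32a} is an honest identity of top-degree forms.
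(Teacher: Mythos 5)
Your proof is correct and follows essentially the same route as the paper: both expand $(d\hat\theta)^n$ from $d\hat\theta = f\,d\theta + df\wedge\theta$ and observe that every term carrying a factor of $\theta$ dies upon wedging with $\hat\theta = f\theta$. The only cosmetic difference is that you compute the remainder explicitly as $n f^{n-1}\,df\wedge\theta\wedge(d\theta)^{n-1}$ via the binomial theorem for commuting $2$-forms, whereas the paper proves by induction only that it has the form $\alpha_n\wedge\theta$, which already suffices.
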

\begin{proof}
From $d\hat{\theta}=f\, d\theta+df\wedge \theta$ we deduce, by induction, 
$$(d\hat{\theta})^n=f^n(d\theta)^n+\alpha_n \wedge \theta$$
where $\alpha_n$ is a differential form of degree $2n-1.$ Thus, 
$$
\hat{\theta}\wedge (d\hat{\theta})^n= f\theta \wedge (f^n(d\theta)^n+\alpha_n \wedge \theta)
= f^{n+1} \theta\wedge (d\theta)^n.
$$
\end{proof}
\begin{lemma}\label{lem 2}
Let $M$ be a strictly pseudoconvex CR manifold of  dimension $2m+1$ and let $\phi: M \to (\mathbb{S}^{2n+1},\theta_0)$ be a CR map. Then, for every $\theta\in {\mathcal P}_+(M)$, 
\begin{equation}\label{equ 33}
\phi^*\theta_{0}=\frac{1}{2m}\left(\sum_{i=1}^{2n+2}\left|\nabla^H\phi_i\right|^2_{\theta}\right)\,\theta
\end{equation}
where $\phi_1,\dots,\phi_{2n+2}$ are the Euclidean components of $\phi$.
\end{lemma}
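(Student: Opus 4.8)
The plan is to prove first that $\phi^*\theta_0$ is a pointwise multiple of $\theta$, say $\phi^*\theta_0=g\,\theta$, and then to identify the factor $g$ by a trace argument over the Levi distribution. The starting observation is that the coefficient produced this way is automatically the \emph{averaged} horizontal energy density of $\phi$, which is exactly the right‑hand side of \eqref{equ 33}.

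First I would use the horizontality half of the CR condition \eqref{crmap}. For any $X\in H(M)$ one has $d\phi(X)\in H(\Sp^{2n+1})=\ker\theta_0$, hence $(\phi^*\theta_0)(X)=\theta_0(d\phi(X))=0$. Thus $\phi^*\theta_0$ annihilates $H(M)=\ker\theta$, and since $\theta$ is a contact form this forces $\phi^*\theta_0=g\,\theta$ for a unique $g\in C^\infty(M)$. To compute $g$, I would differentiate and restrict to $H(M)$: for $X,Y\in H(M)$ the term $(dg\wedge\theta)(X,Y)=dg(X)\theta(Y)-dg(Y)\theta(X)$ vanishes, so from $d(g\theta)=dg\wedge\theta+g\,d\theta$ one gets $(\phi^*d\theta_0)(X,Y)=g\,d\theta(X,Y)$. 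Specializing to $Y=JX\in H(M)$ and invoking the $J$‑equivariance half of \eqref{crmap}, namely $d\phi(JX)=J\,d\phi(X)$, together with $L_{\theta_0}(U,U)=-d\theta_0(JU,U)=d\theta_0(U,JU)$ and the analogous $d\theta(X,JX)=L_\theta(X,X)$, yields the pointwise conformality relation
\[
\left|d\phi(X)\right|^2_{\theta_0}=g\,\left|X\right|^2_{\theta}\qquad\text{for all }X\in H(M).
\]

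Then I would trace this identity. Fixing an $L_\theta$‑orthonormal frame $e_1,\dots,e_{2m}$ of $H(M)$ and summing gives $\sum_{a=1}^{2m}\left|d\phi(e_a)\right|^2_{\theta_0}=2m\,g$. At this point I invoke the only input specific to the target, recorded right after the definition of $\theta_0$: the Levi form $L_{\theta_0}$ restricted to $H(\Sp^{2n+1})$ coincides with the round (Euclidean) metric, so $\left|d\phi(e_a)\right|^2_{\theta_0}=\sum_{i=1}^{2n+2}\left(d\phi_i(e_a)\right)^2$. Exchanging the two summations and using $d\phi_i(e_a)=L_\theta(\nabla^H\phi_i,e_a)$ — whence $\sum_{a}\left(d\phi_i(e_a)\right)^2=\left|\nabla^H\phi_i\right|^2_{\theta}$ by the Parseval expansion in the orthonormal frame — produces $2m\,g=\sum_{i=1}^{2n+2}\left|\nabla^H\phi_i\right|^2_{\theta}$, which is precisely \eqref{equ 33}.

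The step requiring the most care is the interplay of the two clauses of the CR condition with the two Levi forms: the horizontality $d\phi(H(M))\subset H(\Sp^{2n+1})$ is what places $d\phi(X)$ in the domain of $L_{\theta_0}$, while the $J$‑equivariance is what converts the $d\theta_0$‑pairing into a genuine Levi‑form norm; both are needed to obtain the conformality relation, and losing either one breaks the argument. Everything beyond the sphere‑specific identification $L_{\theta_0}=g_{\mathrm{round}}$ on $H(\Sp^{2n+1})$ is formal and would hold for any strictly pseudoconvex CR target, so I expect no genuine obstacle once the frame bookkeeping is done correctly.
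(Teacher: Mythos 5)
Your proposal is correct and follows essentially the same route as the paper: horizontality of the CR map gives $\phi^*\theta_0=g\,\theta$, differentiating and evaluating on $(X,JX)$ with the $J$-equivariance yields the Levi-form conformality, and tracing over an $L_\theta$-orthonormal frame of $H(M)$, using that $L_{\theta_0}$ is the round metric on $H(\Sp^{2n+1})$ and $d\phi_i(X)=L_\theta(\nabla^H\phi_i,X)$, identifies $2m\,g=\sum_i\left|\nabla^H\phi_i\right|^2_\theta$. The only difference is presentational (you state the conformality as a norm identity before tracing, while the paper traces the identity $f\,L_\theta(X,X)=\sum_i L_\theta(\nabla^H\phi_i,X)^2$ directly), so there is nothing substantive to flag.
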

\begin{proof}
Since $\phi$ is a CR map, the $1$-form $\phi^*\theta_{0}$ vanishes on $H(M)$ which implies  
that there exists $f\,\in C^\infty(M)$ such that 
\begin{equation}\label{e:3.1}
\phi^*\theta_{0}=f\theta.
\end{equation}
Differentiating, we get 
$$\phi^*d\theta_{0}=df\wedge \theta+fd\theta.$$
Hence, for every $X$, $Y\in H_x(M),$ one has $\phi^*d\theta_{0}(X,Y)=fd\theta (X,Y)$ and, using \eqref{crmap},
$$L_{\theta_0}(d\phi(X),d\phi(X))= d\theta_0 (d\phi(X),J^{\Sp^{2n+1}}d\phi(X))=d\theta_0 (d\phi(X),d\phi(J^{M}X))$$
$$=\phi^*d\theta_{0}(X, J^{M}X)=fd\theta (X, J^{M}X) = fL_\theta (X,X).$$
On the other hand, since $L_{\theta_0}$ coincides with the standard inner product on $H_{\phi(x)}(\Sp^{2n+1})$, 
$$L_{\theta_0}(d\phi(X),d\phi(X))=\sum_{i=1}^{2n+2} \left( d\phi_i(X)\right)^2=\sum_{i=1}^{2n+2} L_\theta( \nabla^H\phi_i ,X)^2.$$
Thus,
$$ fL_\theta (X,X)=\sum_{i=1}^{2n+2} L_\theta( \nabla^H\phi_i ,X)^2 .$$
Taking an $L_\theta$-orthonormal basis $\{e_1, \dots,e_{2m}\}$ of $H_x(M)$, we get 
$$2mf=\sum_{j=1}^{2m}\sum_{i=1}^{2n+2} L_\theta( \nabla^H\phi_i ,e_j)^2 =\sum_{i=1}^{2n+2}L_\theta (\nabla^H\phi_i,\nabla^H\phi_i)=  \sum_{i=1}^{2n+2}\left|\nabla^H\phi_i\right|_{\theta}^2$$ 
which implies \eqref{equ 33}, thanks to \eqref{e:3.1}. 
\end{proof}
If $\phi:M\to\R^N$ is a map and $\mu$ is a measure on $M$, we denote by $\int_M\phi \, d\mu$ the vector   $\left(\int_M\phi_1d\mu,\dots, \int_M\phi_{N}d\mu \right)\in\R^N$, where $\phi=(\phi_1,\cdots,\phi_N)$.

\begin{lemma}\label{lem 1}
Let $M$ be a  compact  manifold and let $\mu$ be a measure on $M$ such that no open set has measure zero. If $\phi: M \to \mathbb{S}^{2n+1}$ is a  non constant continuous map, then there exists  a   pair $(p,t)\in \mathbb{S}^{2n+1}\times [0,+\infty)$ such that 
$$\int_M \gamma^p_t \circ\phi  \, d\mu=0.$$
\end{lemma}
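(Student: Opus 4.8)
The plan is to adapt Hersch's ``center of mass'' (conformal balancing) method to the one-parameter families $\gamma_t^p$ produced above, which play here the role that the conformal dilations of the round sphere play in the Riemannian case. Normalising so that $\mu(M)=1$ (this is harmless, since we only seek a zero of the integral), I would introduce the barycenter map
$$\mathcal{G}(p,t)=\int_M \gamma_t^p\circ\phi\,d\mu\in\R^{2n+2}.$$
Being a $\mu$-average of points of $\Sp^{2n+1}\subset\R^{2n+2}$, it takes values in the closed unit ball $\overline{\mathcal B}$; in fact $|\mathcal G(p,t)|<1$ for every finite $t$, because $\phi$ is non-constant and no open set is $\mu$-null, so $\gamma_t^p\circ\phi$ is not $\mu$-a.e.\ constant. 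Since $\gamma_0^p=\mathrm{id}$ for every $p$, the value $\mathcal G(p,0)=\int_M\phi\,d\mu$ is independent of $p$; this suggests collapsing the parameters by setting $a=(\tanh t)\,p$, which ranges over the open ball $\mathcal B\subset\R^{2n+2}$, and defining $\Theta:\overline{\mathcal B}\to\overline{\mathcal B}$ by $\Theta(0)=\int_M\phi\,d\mu$ and $\Theta(a)=\mathcal G(p,t)$ for $0<|a|<1$, where $a=(\tanh t)\,p$.

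Granting that $\Theta$ extends continuously to $\overline{\mathcal B}$ with $\Theta|_{\partial\mathcal B}=\mathrm{id}$, the conclusion follows from a standard topological degree argument. For the interior value $y=0$ we have $0\notin\Theta(\partial\mathcal B)=\partial\mathcal B$, and since the Brouwer degree $\deg(\Theta,\mathcal B,0)$ depends only on the boundary values of $\Theta$, it equals $\deg(\mathrm{id},\mathcal B,0)=1\neq0$. Hence $0\in\Theta(\mathcal B)$, i.e.\ there is an interior point $a_0=(\tanh t_0)\,p_0$ with $\int_M\gamma_{t_0}^{p_0}\circ\phi\,d\mu=0$; as $a_0$ is interior, $t_0\in[0,+\infty)$, which is exactly the pair asserted by the lemma. (Equivalently, if $0$ were missed then $a\mapsto\Theta(a)/|\Theta(a)|$ would retract $\overline{\mathcal B}$ onto its boundary, which is impossible.)

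The step I expect to be the main obstacle is precisely the boundary behaviour of $\Theta$. Using \eqref{e: 2a0} one checks that $\gamma_t^p(\zeta)\to p$ as $t\to+\infty$ for every $\zeta\neq-p$, whereas $-p$ is a fixed point of each $\gamma_t^p$; dominated convergence then gives
$$\lim_{t\to+\infty}\mathcal G(p,t)=\bigl(1-2\,\mu(\phi^{-1}(-p))\bigr)\,p,$$
which is the desired value $p$ exactly when $\phi$ charges no mass on the antipodal fibre $\phi^{-1}(-p)$. The push-forward $\phi_*\mu$ has at most countably many atoms, and at most one of mass $\ge\tfrac12$, so $\mathcal G(p,t)\to p$ for all but countably many directions $p$; the genuine difficulty is that near an atom the convergence $\gamma_t^p\circ\phi\to p$ is not uniform (mass located exactly at $-p$ never moves), so $\Theta$ may fail to be continuous up to the boundary at those few directions. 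I would dispose of this in the classical way: first prove the lemma when $\phi_*\mu$ is atomless, where $\Theta|_{\partial\mathcal B}=\mathrm{id}$ holds verbatim; and then reduce the general case to it, either by a weak-$*$ approximation of $\mu$ by measures with atomless push-forward followed by a compactness argument on the resulting parameters $(p_k,t_k)$ (checking that $t_k$ stays bounded), or by carrying out the degree computation directly on the spheres $\{|a|=r\}$ as $r\to1^-$ and verifying that the single possible large atom does not alter the degree.
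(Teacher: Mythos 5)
Your main line is exactly the paper's argument, carried out with more care: the paper defines $F(p,t)=\frac1V\int_M\gamma^p_t\circ\phi\,d\mu$, observes that $F(\cdot,0)$ is constant while $F(\cdot,t)$ tends to the identity of $\Sp^{2n+1}$ as $t\to+\infty$, and concludes in one line that ``such a map $F$ is necessarily onto''. Your reparametrisation $a=(\tanh t)\,p$, the strict bound $|\mathcal{G}(p,t)|<1$ for finite $t$, and the explicit Brouwer-degree (no-retraction) computation are precisely the details the paper delegates to \cite{EI0, Hersch}, and when $\phi_*\mu$ is atomless your proof is complete --- indeed more rigorous than the source, since the boundary-continuity issue you isolate is exactly what the paper's ``necessarily onto'' glosses over.

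The genuine gap is your final paragraph: the proposed reduction of the atomic case to the atomless one cannot be carried out, because the lemma as stated is \emph{false} when $\phi_*\mu$ carries an atom of relative mass greater than $1/2$ --- precisely the case you flag. Since each $\gamma^p_t$ maps the sphere into itself, the integrand always has norm $1$; so if $m=\mu\bigl(\phi^{-1}(q)\bigr)>\frac12\,\mu(M)$ for some $q\in\Sp^{2n+1}$, then for \emph{every} pair $(p,t)$ one has $\bigl|\int_M\gamma^p_t\circ\phi\,d\mu\bigr|\ge m-(\mu(M)-m)=2m-\mu(M)>0$, and no balancing pair exists. A concrete counterexample satisfying all the stated hypotheses: take $M=\Sp^1$, $\phi$ an embedding onto a great circle of $\Sp^{2n+1}$ (nonconstant, continuous), and $\mu=\delta_{x_0}+\varepsilon\lambda$ with $\lambda$ the Lebesgue measure and $0<\varepsilon<1/(2\pi)$; no open set is $\mu$-null, yet $\bigl|\int_M\gamma^p_t\circ\phi\,d\mu\bigr|\ge 1-2\pi\varepsilon>0$ for all $(p,t)$. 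Consequently your weak-$*$ approximation scheme must break down (the balancing parameters $t_k$ cannot remain bounded), and the degree on the spheres $\{|a|=r\}$ as $r\to1^-$ \emph{is} altered near the direction opposite a large atom, where the radial boundary limit $(1-2m)\,p$ points backwards. The correct repair is not a reduction but a strengthened hypothesis: assume $\phi_*\mu$ has no atom of mass $\ge\frac12\mu(M)$ (for instance, $\mu$ atomless), which costs nothing for the paper's purpose, since there $\mu$ is the measure of the volume form $\psi_\theta$ and $\phi$ is the identity, so $\phi_*\mu$ is atomless with full support and your atomless-case argument applies verbatim. Note that the paper's own proof silently assumes this same restriction, so your diagnosis of the difficulty is sharper than the source; only the planned fix is unachievable.
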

\begin{proof}
The proof uses standard arguments (see \cite{EI0, Hersch}). We consider the map 
$$\begin{array}{ccccc}
F & : & \mathbb{S}^{2n+1}\times (0,+\infty) & \to & \mathbb{B}^{2n+2}\subset \mathbb{R}^{2n+2}\\
 & & (p,t)& \mapsto & \frac{1}{V}\int_M \gamma^p_t \circ\phi  \, d\mu \\
 \end{array}$$
 where $V=\int_M d\mu$ and $\mathbb{B}^{2n+2}$ is the unit Euclidean ball. Observe that (see \eqref{e: 2a0}), $\forall p\in \mathbb{S}^{2n+1}$, $\gamma^p_0$ is the identity map while, $\forall \zeta\ne -p$,  $\gamma^p_t(\zeta)$ tends to $p$ as $t\to +\infty$. Consequently, 
 $F(\cdot,0)=\frac{1}{V}\int_M \varphi\, d\mu$ is a constant map 
 and $F(\cdot,t)$ tends to the identity of  $\, \mathbb{S}^{2n+1}$ as $t\to +\infty$. Such a map $F$ is necessarily onto which implies that the origin of $\R^{2n+2}$ belongs to its image. 
\end{proof}
\subsection{Proof  Theorem \ref{theo 41} }
Let ${\theta}\in {\mathcal P}_+(\Sp^{2n+1})$ be a strictly pseudoconvex pseudo-Hermitian structure. We apply   Lemma \ref{lem 1} to the identity map of $\Sp^{2n+1}$ and the measure  induced by $\psi_{ {\theta}} $ to obtain the existence 
of a pair  $(p,t)\in \mathbb{S}^{2n+1}\times [0,+\infty)$ such that 
\begin{equation*}
\int_{\mathbb{S}^{2n+1}} \gamma^p_t \, \psi_{ {\theta}}=0.
\end{equation*}
For simplicity, we write $\gamma$ for  $\gamma^p_t $. The Euclidean components $\gamma_1,\dots \gamma_{2n+2}$ of $\gamma$ satisfy $\int_{\mathbb{S}^{2n+1}} \gamma_j\, \psi_{ {\theta}}=0$. Hence, applying the min-max principle \eqref{minmax0}, we get for every $j\le 2n+2$,  
$$\lambda_1({\theta})\int_{\mathbb{S}^{2n+1}} \gamma_j^2\, \psi_{ {\theta}} \le \int_{\mathbb{S}^{2n+1}} \left| {\nabla}^H \gamma_j \right|_{ {\theta}}^2 \psi_{ {\theta}} .$$
 Summing up we obtain, with $\sum_{j\le 2n+2} \gamma_j^2 =1$,
\begin{eqnarray}
\lambda_1({\theta}) V({\theta}) &\leq& 
\sum_{j\le 2n+2}  \int_{\mathbb{S}^{2n+1}} \left| {\nabla}^H \gamma_j \right|_{ {\theta}}^2 \psi_{ {\theta}} \\
\label{minmax}
&\le& 2n\left(  \int_{\mathbb{S}^{2n+1}} \left( \frac 1{2n}\left| {\nabla}^H \gamma_j \right|_{ {\theta}}^2\right)^{n+1} \psi_{{\theta}} \right)^{\frac{1}{n+1}} 
V({\theta})^{1- \frac{1}{n+1}}.
\end{eqnarray}
Using Lemma \ref{lem 2}, we see that, since $\gamma$ is a CR map from $(\mathbb{S}^{2n+1},{\theta})$ to $(\mathbb{S}^{2n+1},{\theta_0})$,
\begin{eqnarray}\label{gamma}
\gamma^*\theta_0= \left(\frac 1{2n}\sum_{j\le 2n+2} \left| {\nabla}^H \gamma_j \right|_{ {\theta}}^2 \right) {\theta}
\end{eqnarray}
which gives, thanks to Lemma \ref{lem 0}
$$\psi_{\gamma^*\theta_0}= \left(\frac 1{2n}\sum_{j\le 2n+2} \left| {\nabla}^H \gamma_j \right|_{ {\theta}}^2 \right)^{n+1} \psi_{{\theta}}.$$
Thus
$$\lambda_1({\theta}) V({\theta}) \le 2n V({\gamma^*\theta_0})^{ \frac{1}{n+1}}V({\theta})^{1- \frac{1}{n+1}}.
$$
Since  $V({\gamma^*\theta_0})=V(\theta_0)$, we finally get
$$\lambda_1({\theta}) V({\theta})^{ \frac{1}{n+1}} \le 2n V({\theta_0})^{ \frac{1}{n+1}} 
$$
which proves the inequality of the theorem thanks to \eqref{eqn e44}.

\smallskip
Now, if the equality holds  in \eqref{maineq}, this means that we have equality in the Cauchy-Schwarz inequality used in \eqref{minmax}. Thus, $\sum_{j\le 2n+2}   \left| {\nabla}^H \gamma_j \right|_{ {\theta}}^2 $ is constant and, thanks to \eqref{gamma}, ${\theta}$ is proportional to ${\gamma^*\theta_0}$ which takes the form given by \eqref{e: 2a}.

\smallskip
Conversely,  it is clear that when $\gamma$ is a CR-automorphism of the sphere then $\lambda_1( {\gamma^*\theta_0})= \lambda_1({\theta_0})=2n $ and  $V({\gamma^*\theta_0})=V(\theta_0)$. Hence, the equality holds in \eqref{maineq}.



\end{document}